\newtheorem{thm}{Theorem}
\newtheorem{lem}[thm]{Lemma}
\newtheorem*{ques*}{Question}
\newtheorem*{example*}{Example}
\newtheorem*{porism*}{Porism}
\newtheorem*{scholium*}{Scholium}
\newtheorem*{thm*}{Theorem}
\newtheorem*{defin*}{Definition}
\newtheorem*{lem*}{Lemma}
\newtheorem*{prop*}{Proposition}
\def\C{{\mathbb C}}
\def\bR{{\mathbb R}}
\def\bZ{{\mathbb Z}}
\def\del{{\partial}}
\begin{document}
\thispagestyle{empty}
\title[Cyclic quadrilaterals and smooth Jordan curves]{Cyclic quadrilaterals and smooth Jordan curves}
\author{Joshua Evan Greene} 
\address{Department of Mathematics, Boston College, USA}
\email{joshua.greene@bc.edu}
\urladdr{https://sites.google.com/bc.edu/joshua-e-greene}
\author{Andrew Lobb} 
\address{Mathematical Sciences,
	Durham University,
	UK}
\email{andrew.lobb@durham.ac.uk}
\urladdr{http://www.maths.dur.ac.uk/users/andrew.lobb/}
\thanks{JEG was supported on NSF Award DMS-2005619.}

\begin{abstract}
For every smooth Jordan curve $\gamma$ and cyclic quadrilateral $Q$ in the Euclidean plane, we show that there exists an orientation-preserving similarity taking the vertices of $Q$ to $\gamma$.
The proof relies on the theorem of Polterovich and Viterbo that an embedded Lagrangian torus in $\C^2$ has minimum Maslov number 2.
\end{abstract}

\maketitle

A quadrilateral $Q$ {\em inscribes} in a smooth Jordan curve $\gamma$ in the Euclidean plane if there exists an orientation-preserving similarity of the plane taking the vertices of $Q$ to $\gamma$; it is {\em cyclic} if it inscribes in a circle.
The result of this paper is the solution of the cyclic quadrilateral peg problem \cite[Conjecture~9]{matschke2014}:

\begin{thm*}
	\label{thm:main_theorem}
	Every cyclic quadrilateral inscribes in every smooth Jordan curve in the Euclidean plane.
\end{thm*}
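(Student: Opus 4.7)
The plan is to adapt our rectangle-peg strategy, now using the Polterovich--Viterbo theorem in place of the Nemirovski--Shevchishin Klein-bottle obstruction. Assuming for contradiction that some cyclic quadrilateral $Q$ does not inscribe in $\gamma$, the aim is to produce an embedded Lagrangian torus in $\mathbb{C}^2$ whose Maslov class contradicts the conclusion that its minimum Maslov number is $2$.

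First I would recast inscribing $Q$ as a coincidence condition on two chords of $\gamma$. Writing $M_i$ and $V_i$ ($i=1,2$) for the midpoint and half-displacement of the two diagonals of $Q$, a $4$-tuple $x_1,\dots,x_4\in\gamma$ in cyclic order is similar to $Q$ iff the midpoint/half-displacement of the chord $(x_2,x_4)$ is the image of that of $(x_1,x_3)$ under the complex-affine map
\[
T_Q(m,v) \;:=\; \Bigl(m + \tfrac{M_2-M_1}{V_1}\,v,\; \tfrac{V_2}{V_1}\,v\Bigr).
\]
The pair map $\phi(p,q):=((p+q)/2,(q-p)/2)$ embeds $\gamma\times\gamma$ into $\mathbb{C}^2$ as a Lagrangian torus $\Lambda$, since a direct check shows the pullback of $\omega_{\mathrm{std}}$ vanishes on the cross-term between the two factors of $\gamma\times\gamma$. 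Inscribed $Q$-quadrilaterals therefore correspond exactly to points of $\Lambda\cap T_Q^{-1}(\Lambda)$.

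For rectangles one has $M_1=M_2$ and $|V_1|=|V_2|$, which forces $T_Q$ to be unitary and makes $T_Q^{-1}(\Lambda)$ itself Lagrangian; in the earlier paper the two surfaces could then be assembled (after passing to unordered pairs) into a Lagrangian Klein bottle, obstructed by Nemirovski--Shevchishin. For a general cyclic $Q$ the two diagonals have unequal lengths, $T_Q$ fails to be unitary, and this direct assembly breaks. The proposal is to replace it by a modified Lagrangian construction---most plausibly a Lagrangian-graph construction inside the twisted product $(\mathbb{C}^2,\omega)\times(\mathbb{C}^2,-\omega)$ followed by a symplectic reduction, or a Lagrangian surgery adapted to the non-unitary twist---producing a single closed Lagrangian surface $\Sigma\subset\mathbb{C}^2$ whose embeddedness is equivalent to the non-inscription of $Q$, and whose topology is that of a torus (the broken symmetry between the two diagonals changing the expected topological type from Klein bottle to torus). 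A winding-number computation of the Maslov homomorphism $H_1(\Sigma;\mathbb{Z})\cong\mathbb{Z}^2\to\mathbb{Z}$---read off from the rotations of the two diagonal tangent directions along the generating loops---should then place its image in a subgroup of $\mathbb{Z}$ missing $\pm 2$, contradicting Polterovich--Viterbo.

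The main obstacle is exactly this construction of $\Sigma$: the non-unitarity of $T_Q$ forbids the naive doubling used for rectangles, so producing an embedded Lagrangian torus in $\mathbb{C}^2$ directly from the failure of inscription will require genuinely new symplectic input that encodes the asymmetry between the two diagonals of a general cyclic $Q$ (their differing lengths, the offset of their midpoints, and the angle between them). Once $\Sigma$ is in hand, the Maslov-arithmetic step and the Polterovich--Viterbo contradiction are essentially bookkeeping.
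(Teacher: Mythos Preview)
Your overall plan---manufacture a closed Lagrangian torus in $\C^2$ whose embeddedness would follow from non-inscription, then contradict Polterovich--Viterbo via a Maslov computation---is exactly the paper's, and the Maslov bookkeeping you anticipate is indeed routine once the torus exists. But there is a genuine gap at the step you flag yourself: you never construct the torus. The suggested workarounds (Lagrangian graphs in $(\C^2,\omega)\times(\C^2,-\omega)$ followed by a symplectic reduction, or an unspecified ``surgery adapted to the non-unitary twist'') are left undeveloped, and nothing in the proposal actually uses the hypothesis that $Q$ is \emph{cyclic}. Since the result is false for non-cyclic $Q$, an argument that does not invoke cyclicity at a concrete step cannot close.

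The missing idea is that you do not need the chord map to be unitary; you only need its pullback of $\omega$ to be a \emph{product} form $a\,dz\wedge d\bar z+b\,dw\wedge d\bar w$ with $a,b>0$, since $\gamma\times\gamma$ is Lagrangian for every such form. Your midpoint map is the special case $r=\tfrac12$ of
\[
F_r(p,q)=\bigl(rp+(1-r)q,\ \sqrt{r(1-r)}\,(p-q)\bigr),\qquad F_r^{*}\omega = r\,dp\wedge d\bar p+(1-r)\,dq\wedge d\bar q,
\]
so $F_r(\gamma\times\gamma)$ is Lagrangian for the standard $\omega$ for every $r\in(0,1)$. Let $X$ be the intersection of the diagonals of $Q$, set $s=|AX|/|AC|$, $t=|BX|/|BD|$, $\theta=\angle AXB$, and put $T_1=R_\theta\circ F_s(\gamma\times\gamma)$, $T_2=F_t(\gamma\times\gamma)$ with $R_\theta(z,w)=(z,e^{i\theta}w)$. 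The first coordinates of $F_s(A,C)$ and $F_t(B,D)$ both equal $X$, while Euclid's chord theorem $|AX|\cdot|CX|=|BX|\cdot|DX|$---which \emph{characterises} cyclicity---says exactly that the second coordinates have the same modulus, hence agree after rotation by $\theta$. Thus $T_1$ and $T_2$ are both Lagrangian for the same standard $\omega$, they meet cleanly along $\gamma\times\{0\}$, and ordinary Lagrangian surgery there yields the immersed torus $T$ with $m(T)=4$. Cyclicity enters not through any symmetry of the surface but as the algebraic identity that allows two \emph{non}-unitary chord maps to land simultaneously in Lagrangian position.
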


\noindent
The result is best possible, by considering the case in which the smooth Jordan curve is itself a circle.
Moreover, some regularity hypothesis on the Jordan curve is necessary in order for the Theorem to hold, as the only cyclic quadrilaterals that inscribe in all triangles are the isosceles trapezoids \cite[$\S$~3.6]{pak2008}.

\begin{proof}
For a fixed cyclic quadrilateral $Q$ and smooth Jordan curve $\gamma$, we construct a pair of Lagrangian tori $T_1$ and $T_2$ in standard symplectic $\C^2$.
They intersect cleanly along $\gamma \times \{0\}$ and in a disjoint set of points $P$ which parametrize the inscriptions of $Q$ in $\gamma$.
By smoothing the intersection along $\gamma \times \{0\}$, we obtain an immersed Lagrangian torus $T$ whose set of self-intersections is $P$.
As we show, $T$ has minimum Maslov number 4.
On the other hand, a theorem independently due to Polterovich and Viterbo asserts that an embedded Lagrangian torus in $\C^2$ has minimum Maslov number 2 \cite{polto1991,viterbo1990}.
Therefore $P$ is non-empty, so $Q$ inscribes in $\gamma$.
\end{proof}

The strategy of proof of the Theorem resembles that of our earlier result, which treated the case in which $Q$ is a rectangle \cite{greenelobb1}.  In that case, we additionally arranged that $T$ is invariant under a symplectic involution $\tau$ of $\C^2$.
Passing to the quotient by $\tau$, we obtained an immersed Lagrangian Klein bottle $K = T / \tau$ in $\C^2$ whose self-intersections $P / \tau$ parametrize inscriptions of $Q$ in $\gamma$ up to rotation by $\pi$.
A theorem independently due to Shevchishin and Nemirovski asserts that there is no embedded Lagrangian Klein bottle in $\C^2$  \cite{nemirovski2,shevchishin2009}, thereby ensuring that $P$ is non-empty, so $Q$ inscribes in $\gamma$.
In the more general case of a cyclic quadrilateral, $T$ does not admit any apparent symmetry, which impedes reusing the same approach.  
Our revised approach produces a stronger result and somewhat more directly.

\noindent
{\bf Cyclic quadrilaterals.}
We begin by characterizing the set of cyclic quadrilaterals.
Let $Q$ denote a convex quadrilateral in the plane whose vertices are labeled $ABCD$ in counterclockwise order.
Its diagonals $AC$ and $BD$ intersect in a point $X$.
Euclid's chord theorem asserts that
$Q$ is cyclic if and only if $|AX| \cdot |CX| = |BX| \cdot |DX|$ \cite[Theorem III.35]{euclid}.\footnote{Euclid proves the forward direction, which can be used to prove the reverse.}

By a cyclic permutation of the vertex labels, we may assume that $|AX| \le |CX|$ and $|BX| \le |DX|$.
We thereby obtain real values $s = |AX|/|AC|$ and $t = |BX|/|BD|$ in $(0,1/2]$ and an angle $\phi = \angle AXB$ in $(0,\pi)$.
The triple of values $(s,t,\phi)$ uniquely determines the oriented similarity class of $Q$, unless one of $s$ and $t$ equals $1/2$, in which case $(s,t,\phi)$ and
$(t,s,\pi-\phi)$ determine the same oriented similarity class.

We reformulate the preceding description for our present purposes.
Identify the Euclidean plane with the complex numbers $\C$.
Define $\C$-linear automorphisms of $\C^2$ by the matrices
\[
F_r = \left( \begin{matrix} r & 1-r \\ \sqrt{r(1-r)} & -\sqrt{r(1-r)} \end{matrix} \right) \quad \textup{and} \quad R_\phi = \left( \begin{matrix} 1 & 0 \\ 0 & e^{i \phi} \end{matrix} \right)
\]
for values $r \in (0,1/2]$ and $\phi \in (0,\pi)$.

\begin{lem}
	\label{lem:parametrizing_cyclic_quads}
Points $A,B,C,D \in \C$ correspond as above to vertices of a cyclic quadrilateral with parameters $(s,t,\phi)$ if and only if
\begin{equation}
\label{eq: cyclic quad}
R_\phi \circ F_s(A,C) = F_t(B,D) \quad \textup{ and } \quad A \ne C \,\, ({\rm equivalently} \,\,\, B \ne D).
\end{equation}
\end{lem}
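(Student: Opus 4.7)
The plan is to unpack the matrix equation \eqref{eq: cyclic quad} into two scalar equations and read off their geometric meaning. Computing directly,
\[
R_\phi \circ F_s(A,C) = \bigl(sA + (1-s)C,\; e^{i\phi}\sqrt{s(1-s)}(A-C)\bigr)
\]
and $F_t(B,D) = \bigl(tB+(1-t)D,\; \sqrt{t(1-t)}(B-D)\bigr)$, so \eqref{eq: cyclic quad} is equivalent to the pair of scalar equations
\[
(\mathrm{i})\quad sA + (1-s)C = tB + (1-t)D, \qquad (\mathrm{ii})\quad e^{i\phi}\sqrt{s(1-s)}\,(A-C) = \sqrt{t(1-t)}\,(B-D).
\]
Since $s,t\in(0,1/2]$, the coefficients in (i) are positive and sum to $1$, so the common value must lie on both segments $AC$ and $BD$; I would call this point $X$ and identify it with the intersection of the diagonals. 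The division ratios then read $|AX|:|XC| = (1-s):s$ (up to which endpoint one starts from) and similarly for $BD$, recovering the parameters $s,t$ from the geometric picture.

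Next I would take moduli in (ii) to obtain $\sqrt{s(1-s)}|AC| = \sqrt{t(1-t)}|BD|$, which squares to $s(1-s)|AC|^2 = t(1-t)|BD|^2$, i.e., $|AX|\cdot|CX|=|BX|\cdot|DX|$. By Euclid's chord theorem (cited in the excerpt), this is exactly the condition that $A,B,C,D$ are concyclic. Separately, the argument of (ii) asserts that the angle from the directed diagonal $\overrightarrow{CA}$ to $\overrightarrow{DB}$ is $\phi$, which under the chosen cyclic order of vertices is the same as $\angle AXB$.

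For the forward direction, I start from a cyclic quadrilateral $ABCD$ with $X$ the intersection of diagonals and parameters $(s,t,\phi)$; (i) holds because $X$ lies on both diagonals in the required proportions, and (ii) holds by Euclid's chord theorem combined with the definition of $\phi$. For the reverse direction, I assume \eqref{eq: cyclic quad} and $A\neq C$; the hypothesis $A\neq C$ together with (ii) forces $B\neq D$ (since $\sqrt{t(1-t)}\neq 0$), which justifies the parenthetical. Then (i) produces the common point $X$ on both diagonals and identifies the division ratios, while (ii) supplies Euclid's relation and therefore the cyclicity, plus the angle $\phi$.

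The main obstacle is a bookkeeping one rather than a conceptual one: I need to be careful about which labeling convention $(s\leftrightarrow 1-s, \phi \leftrightarrow \pi-\phi)$ is in force, especially at the boundary $s=1/2$ or $t=1/2$, where the excerpt already notes that two parameter triples correspond to the same oriented similarity class; the ambiguity in reading $|AX|$ versus $|CX|$ off equation (i) is precisely what accounts for this. A second small point is to verify that the convex quadrilateral hypothesis (so that the diagonals genuinely cross interior to the segments) is compatible with the strictly positive coefficients appearing in (i), which it is because $s,t\in(0,1/2]$ keeps us away from the degenerate cases.
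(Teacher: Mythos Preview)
Your proposal is correct and follows essentially the same route as the paper: expand $R_\phi\circ F_s(A,C)=F_t(B,D)$ into the two scalar equations, interpret the first as placing a common point $X$ on both diagonals with the prescribed division ratios, and read off from the second both Euclid's chord relation $|AX|\cdot|CX|=|BX|\cdot|DX|$ (via moduli) and the angle $\angle AXB=\phi$ (via arguments). Your explicit remark that $A\ne C$ forces $B\ne D$ through (ii), and your flagging of the $s\leftrightarrow 1-s$ bookkeeping, are welcome additions but do not change the underlying argument.
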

\begin{proof}
Equality in the first coordinate of \eqref{eq: cyclic quad} is equivalent to the assertion that segments $AC$ and $BD$ intersect at a point $X$ so that $|AX| = s \cdot |AC|$ and $|BX| = t \cdot |BD|$.
Equality in the second coordinate given the first then ensures that $\angle AXB = \phi$ and that $|AX| \cdot |CX| = s(1-s) \cdot |AC|^2 = t(1-t) \cdot |BD|^2 = |BX| \cdot |DX|$.
Insisting that $A \ne C$ or $B \ne D$ ensures that $Q$ does not degenerate to a point.
\end{proof}

\noindent
{\bf Two embedded Lagrangian tori.}
Suppose that $Q$ is a cyclic quadrilateral with parameters $(s,t,\phi)$ as above and that $\gamma$ is a smooth Jordan curve in $\C$.
Note that $\gamma \times \gamma$ is a smoothly embedded torus in $\C^2$.  Define tori 
\[
T_1 = R_\phi \circ F_s ( \gamma \times \gamma) \quad \textit{and} \quad T_2 = F_t(\gamma \times \gamma).
\]
Note that both $R_\phi \circ F_s$ and $F_t$ map the point $(z,z)$ to $(z,0)$ for all $z \in \C$.
From Lemma \ref{lem:parametrizing_cyclic_quads} we see that the set of inscriptions of $Q$ in $\gamma$ is parametrized by the set of points
\[
P = T_1 \cap T_2 - \gamma \times \{0\}.
\]
Let $\omega = dz \wedge d\overline{z} + dw \wedge d\overline{w}$ denote the standard symplectic form on $\C^2$, up to scale.

\begin{lem}
	\label{lem:lagrangianandclean}
	The tori $T_1$ and $T_2$ are Lagrangian with respect to $\omega$ and intersect cleanly along $\gamma \times \{0\}$:
	\[
	T_{(p,0)}T_1 \cap T_{(p,0)}T_2 = T_{(p,0)} (\gamma \times \{0\}), \, \, \, \textit{for all } p \in \gamma.
	\]
\end{lem}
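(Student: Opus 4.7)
The plan is to verify both claims by direct computation, exploiting the fact that $R_\phi \circ F_s$ and $F_t$ are $\C$-linear (hence equal to their own differentials).

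First I would observe that $\gamma \times \gamma$ is Lagrangian in $(\C^2, \omega)$: the forms $dz \wedge d\bar z$ and $dw \wedge d\bar w$ each restrict to zero on any real curve in the corresponding $\C$ factor, and no cross terms are present. To transport the Lagrangian property across $F_r$, I would compute the pullback
\[
F_r^* \omega = r\, dz \wedge d\bar z + (1-r)\, dw \wedge d\bar w,
\]
a short expansion in which the mixed terms $dz \wedge d\bar w$ and $dw \wedge d\bar z$ cancel exactly. Both surviving summands still vanish on $\gamma \times \gamma$, so $F_r(\gamma \times \gamma)$ is Lagrangian for every $r \in (0, 1/2]$. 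Since $R_\phi$ is a genuine symplectomorphism (directly, $R_\phi^* \omega = \omega$), postcomposition preserves the Lagrangian property, giving $T_1 = R_\phi(F_s(\gamma \times \gamma))$ and $T_2 = F_t(\gamma \times \gamma)$ Lagrangian.

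For the clean intersection, both $R_\phi \circ F_s$ and $F_t$ send $(z,z) \mapsto (z,0)$, so $\gamma \times \{0\} \subset T_1 \cap T_2$. To verify the tangent condition, I fix $p \in \gamma$, choose a nonzero real tangent vector $u \in T_p \gamma \subset \C$, and apply the linear maps to $T_{(p,p)}(\gamma \times \gamma) = \R u \oplus \R u$. Using that the $\R^2 \to \R^2$ map $(\alpha, \beta) \mapsto (r\alpha + (1-r)\beta, \sqrt{r(1-r)}(\alpha-\beta))$ is invertible (its determinant is $-\sqrt{r(1-r)} \ne 0$), one finds
\[
T_{(p,0)} T_1 = \R u \oplus \R(e^{i\phi} u) \quad \textrm{and} \quad T_{(p,0)} T_2 = \R u \oplus \R u
\]
as real $2$-planes in $T_{(p,0)} \C^2 = \C \oplus \C$. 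In the second factor, $\R u \cap \R(e^{i\phi} u) = \{0\}$ because $e^{i\phi}$ is non-real for $\phi \in (0,\pi)$, so the intersection of tangent spaces equals $\R u \oplus \{0\} = T_{(p,0)}(\gamma \times \{0\})$.

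The step requiring the most care is the identity $F_r^* \omega = r\, dz \wedge d\bar z + (1-r)\, dw \wedge d\bar w$: it is pure bookkeeping, but it isolates why the non-symplectic factor $F_r$ nevertheless sends this particular Lagrangian to a Lagrangian. With that in hand, everything else is a matter of tracking linear images, and the role of $\phi \in (0,\pi)$ emerges as precisely the constraint forcing the tangent-space intersection to be clean rather than larger.
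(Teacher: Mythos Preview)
Your proposal is correct and follows essentially the same approach as the paper: you compute $F_r^*\omega = r\,dz\wedge d\bar z + (1-r)\,dw\wedge d\bar w$, observe that $\gamma\times\gamma$ is Lagrangian for this pulled-back form and that $R_\phi$ is a symplectomorphism, and then identify the tangent spaces $T_{(p,0)}T_1 = \R u \oplus \R(e^{i\phi}u)$ and $T_{(p,0)}T_2 = \R u \oplus \R u$ to conclude cleanness from $e^{i\phi}\notin\R$. The only differences are cosmetic---the paper writes $T_p\gamma$ where you write $\R u$, and you make explicit the invertibility of the real $2\times 2$ block of $F_r$ that the paper leaves to ``direct calculation.''
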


\begin{proof}
A direct calculation shows that
\[
\omega_r : = F_r^* \, \omega = r \cdot dz \wedge d\overline{z} + (1-r) \cdot dw \wedge d\overline{w}
\]
for $r \in (0,1/2]$.
Note that $\gamma \times \gamma$ is Lagrangian with respect to $\omega_r$ and $R_\phi^* \, \omega = \omega$.
It follows that $T_1$ and $T_2$ are Lagrangian with respect to $\omega$.

If $p \in \gamma$ is a point on the Jordan curve, then $T_p\gamma \subset \C$ is a $1$-dimensional real subspace.
A direct calculation shows that
\[ T_{(p,0)}T_1 = T_p\gamma \times \{ 0 \} \oplus \{ 0 \} \times e^{i \phi} T_p\gamma \,\,\, {\rm and} \,\,\, T_{(p,0)}T_2 = T_p\gamma \times \{ 0 \} \oplus \{ 0 \} \times T_p\gamma, \]
so
\[ T_{(p,0)}T_1 \cap T_{(p,0)}T_2 = T_p \gamma \times \{ 0 \} = T_{(p,0)} (\gamma \times \{0\}), \]
and the intersection along $\gamma \times \{ 0 \}$ is clean, as required.
\end{proof}

\noindent
{\bf A surgered immersed Lagrangian torus.}
Because $T_1$ and $T_2$ intersect cleanly along $\gamma \times \{0\}$, a version of the Weinstein neighborhood theorem due to Po\'zniak \cite[Proposition 3.4.1]{pozniak} implies that we can select coordinates $(x_1,y_1,x_2,y_2)$ in a neighborhood $N \approx (\bR/\bZ) \times \bR^3$ of $\gamma \times \{0\} $ such that
\begin{itemize}
\item
$\omega = d x_1 \wedge d y_1 + d x_2 \wedge d y_2$,
\item
$T_1 \cap N = \{ y_1 = y_2 = 0 \}$, and
\item
$T_2 \cap N = \{ y_1 = x_2 = 0 \}$.
\end{itemize}
We smooth the intersection of $T_1$ and $T_2$ in $N$ as suggested by Figure \ref{fig:smoothing} and let $T$ denote the result.
The tangent plane to $T$ at a point in $N$ is spanned by $\del/\del x_1$ and a vector of the form $a \cdot \del/\del x_2 + b \cdot \del/\del y_2$, which are $\omega$-orthogonal.
Thus, $T$ is an immersed Lagrangian torus in $(\C^2,\omega)$, and its set of self-intersections equals $P$, which parametrizes the set of inscriptions of $Q$ in $\gamma$.

\begin{figure}
\includegraphics[width=1.5in]{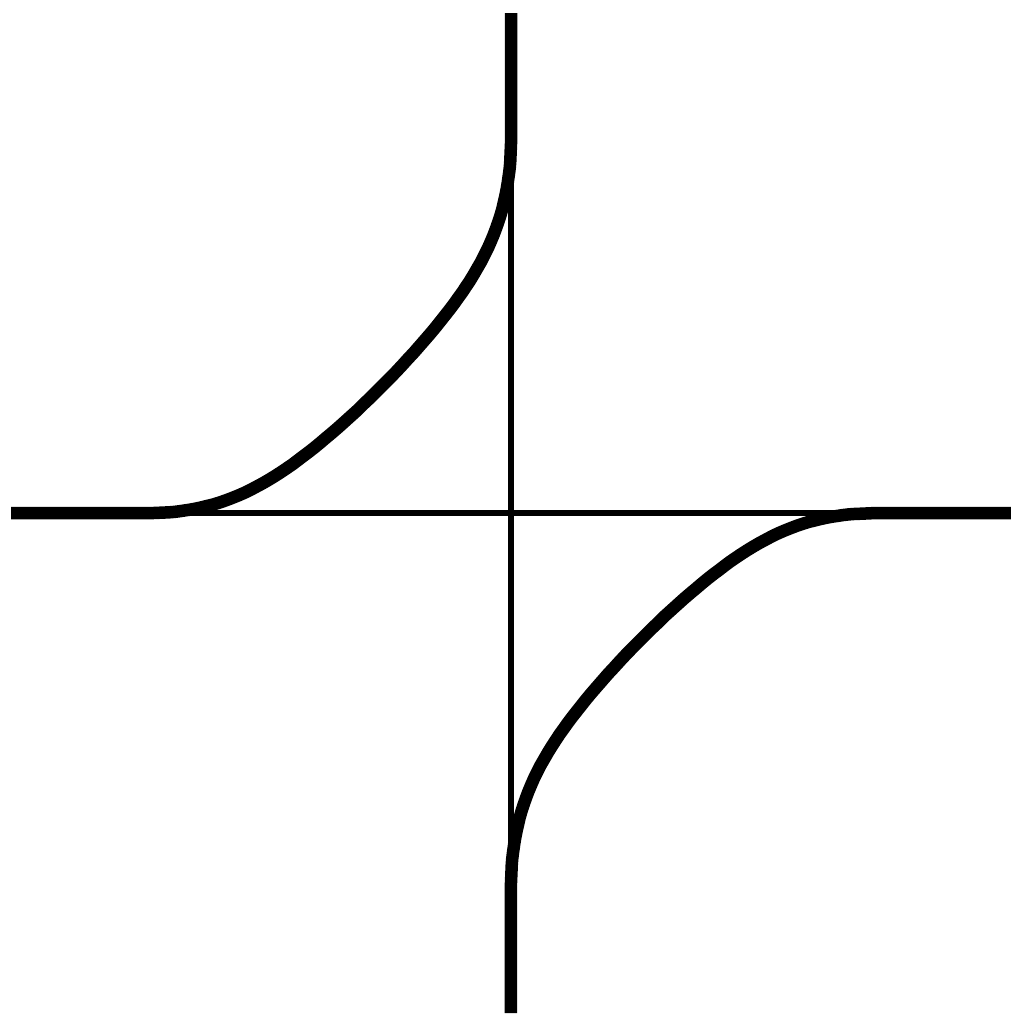}
\put(-10,60){$x_2$}
\put(-50,100){$y_2$}
\put(-44,58){$T_1$}
\put(-66,37){$T_2$}
\put(-75,75){$T$}
\caption{Cross-section of smoothing in the $x_1 = \mathrm{constant}$, $y_1 = 0$ plane.}
\label{fig:smoothing}
\end{figure}

\noindent
{\bf The minimum Maslov number.}
Equip $\C^n$ with a product symplectic form $\omega_0 = \sum_{i=1}^n c_i \cdot dz_i \wedge~d\overline{z}_i$.
An immersed Lagrangian submanifold $i : L \to (\C^n,\omega_0)$ has a Maslov class $\mu \in H^1(L;\bZ)$, given as follows (cf. \cite[pp.117-118]{mcduffsalamon}).
The tangent planes to $i(L)$ along the image of an embedded loop $\alpha \subset L$ determine a loop $\alpha^\sharp$ in $\mathcal{L}(\omega_0)$, the Grassmannian of Lagrangian $n$-planes in $(\C^n,\omega_0)$.
The Maslov index of $\alpha$ is the value $\mu([\alpha]) := [\alpha^\sharp] \in H_1(\mathcal{L}(\omega_0);\bZ) \approx \bZ$, and the minimum Maslov number of $L$ is the non-negative integer $m(L)$ such that $\mu(H_1(L;\bZ)) = m(L) \cdot \bZ$.

\begin{prop*}
	\label{prop:maslovnumberis4}
	The minimum Maslov number of $T$ is $4$.
\end{prop*}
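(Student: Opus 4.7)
My plan is to choose a basis of $H_1(T;\bZ) \cong \bZ^2$ adapted to the surgery decomposition and evaluate the Maslov class on each basis element.

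\emph{Step 1: topology of $T$.} The surgery smoothing in $N$ replaces the intersecting coordinate planes $\{y_1 = y_2 = 0\}$ and $\{y_1 = x_2 = 0\}$ by the two branches of a hyperbolic smoothing in the $(x_2, y_2)$-factor, producing two Lagrangian bridges $\textup{cyl}_{\pm}$ connecting the annular complements $A_i = T_i \setminus N$. Topologically $T$ is a torus assembled as a cyclic chain $A_1 \to \textup{cyl}_+ \to A_2 \to \textup{cyl}_- \to A_1$, with natural basis $\{\alpha, \beta\}$ of $H_1(T;\bZ)$: $\alpha$ is a core circle of $A_1$ (a pushoff of $\gamma \times \{0\}$ inside $T_1$), and $\beta$ traverses the entire chain once.

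\emph{Step 2: $\mu(\alpha) = 4$.} The map $g_s = R_\phi \circ F_s : (\C^2, \omega_s) \to (\C^2, \omega)$ is a linear symplectomorphism, so it preserves the Maslov class, and on the product Lagrangian $\gamma \times \gamma \subset (\C^2, \omega_s)$ the Maslov class splits as the sum of the $1$-dimensional Maslov classes of the two $\gamma$-factors. By the turning tangent theorem, each factor contributes Maslov index $2$ per generator, so the diagonal loop (whose $g_s$-image is $\gamma \times \{0\}$) has Maslov index $2 + 2 = 4$. Since $\alpha$ is homotopic in $T_1 \subset T$ to $\gamma \times \{0\}$ and the tangent planes to $T$ and $T_1$ agree on $A_1$, we obtain $\mu(\alpha) = 4$.

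\emph{Step 3: $\mu(\beta) = 4$.} Decompose $\beta$ into four pieces: arcs $\beta_i$ across $A_i$ and arcs $\sigma_\pm$ across $\textup{cyl}_\pm$. Each $\beta_i$ is a transverse loop in $T_i$ (such as $g_s(\gamma \times \{z_0\})$ or $F_t(\gamma \times \{w_0\})$) with a small segment removed in $N$, and this full loop has Maslov index $2$ by the same product argument as Step 2, contributing approximately $4\pi$ to the phase winding $\int d(\arg \det_\C^2)$. A direct computation in Po\'zniak's Darboux chart shows that traversing a hyperbola branch from the $T_1$-limit to the $T_2$-limit rotates $\arg \det_\C^2$ by $\pm \pi$; since $\beta$ traverses the two bridges in opposite senses under the central symmetry $(x_2, y_2) \mapsto (-x_2, -y_2)$ relating them, the bridge contributions cancel. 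The total phase winding along $\beta$ is exactly $8\pi$, giving $\mu(\beta) = 4$. Hence $\mu(H_1(T;\bZ)) = 4\bZ$ and $m(T) = 4$.

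The delicate point is the sign bookkeeping in Step 3: each individual bridge contributes only a half-winding of $\det_\C^2$, and the opposite-sign cancellation must be verified by a careful orientation analysis of the two smoothing branches rather than merely asserted. Once this is pinned down, the rest of the computation reduces to routine applications of the product formula for the Maslov index of $\gamma \times \gamma$.
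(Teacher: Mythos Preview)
Your approach is essentially the paper's: same basis of $H_1(T;\bZ)$ (the pushoff $\delta=\alpha$ of $\gamma\times\{0\}$ and a loop $\lambda=\beta$ threading once through each of $A_1$, $A_2$, and the two bridges), the same computation $\mu(\delta)=4$ via the product formula on $\gamma\times\gamma$, and the same reduction of $\mu(\lambda)$ to $\mu(\lambda_1)+\mu(\lambda_2)$ for transverse loops $\lambda_j\subset T_j$. The paper packages your bridge-cancellation as the statement that the tangent planes along the difference $1$-cycle $\lambda-\lambda_1-\lambda_2$ (supported in a single $(x_2,y_2)$-slice) form a nullhomotopic loop in $\mathcal{L}(\omega)$, and extracts only $\mu(\lambda)\equiv 0\pmod 4$ from $m(T_j)=2$ rather than pinning down $\mu(\lambda_j)=2$, which already suffices.
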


\begin{proof}
Orienting $\gamma \subset \C$ counterclockwise, its Maslov index equals 2 with respect to $c \cdot dz \wedge d\overline{z}$.
Hence $\gamma \times \{\mathrm{pt.}\}$ and $\{\mathrm{pt.}\} \times \gamma$ both have Maslov index 2 in $\gamma \times \gamma$ with respect to the product form $\omega_r$.
Since their homology classes generate $H_1(\gamma \times \gamma;\bZ)$, we obtain $m(\gamma \times \gamma)=2$.
The diagonal loop $\{ (z,z) : z \in \gamma \}$ is homologous to their sum, so it has Maslov index 4 in $\gamma \times \gamma$ with respect to $\omega_r$.
Applying $R_\phi \circ F_s$ and $F_t$, we deduce that $\gamma \times \{0\}$ has Maslov index 4 in both $T_1$ and $T_2$ with respect to $\omega$ and that $m(T_1)=m(T_2)=2$.
Let $\delta$ denote a push-off of $\gamma \times \{0\}$ in $T_1$ away from the site of surgery.
A neighborhood of $\delta$ survives the surgery, so the Maslov index of $[\delta]$ in $T$ is 4 with respect to $\omega$.

Next, select oriented loops $\lambda_1 \subset T_1$, $\lambda_2 \subset T_2$, and $\lambda \subset T$ such that $\lambda_1 \cup \lambda_2$ and $\lambda$ coincide outside the neighborhood $N$ above and meet it in a single slice $x_1 = \mathrm{constant}$, $y_1 = 0$, as displayed in Figure \ref{fig:smoothing}.
The tangent planes to $T \cup T_1 \cup T_2$ along the difference 1-cycle $\lambda - \lambda_1 - \lambda_2$ describe a nullhomotopic loop in $\mathcal{L}(\omega)$.
Consequently, $[\lambda^\sharp] = [\lambda_1^\sharp] + [\lambda_2^\sharp] \in H_1(\mathcal{L}(\omega);\bZ) \approx \bZ$.
The class $[\lambda_j]$ completes to a basis of $H_1(T_j;\bZ)$ with $[\gamma \times \{0\}]$ for $j=1,2$.
Since $m(T_j) = 2$ and $[\gamma \times \{0\}]$ has Maslov index $4$ in $T_j$, $j=1,2$, it follows that $[\lambda_j]$ has Maslov index $2 \pmod 4$, $j=1,2$.
Therefore, the Maslov index of $[\lambda]$ in $T$ is a multiple of 4.

Since $[\delta]$ and $[\lambda]$ form a basis for $H_1(T;\bZ)$, it follows that $m(T)=4$.
\end{proof}

\subsection*{Acknowledgements.}
We thank Mohammed Abouzaid, Lev Buhovsky, Joe Johns, and Leonid Polterovich for useful discussions and Lenny Ng for drawing our attention to a key result from \cite{euclid}.

\bibliographystyle{amsplain}
\bibliography{references/works-cited.bib}
\end{document}